\def\phi{\varphi}
\def\0{\varnothing}
\def\N{\bf N}
\newtheorem{The}{Theorem}
\newtheorem{Lem}[The]{Lemma}
\newtheorem{Rem}{Remark}
\title{A NEW MEASURE OF ASYMMETRY OF BINARY WORDS}
\author{Olexandr Ravsky}
\address{Department of Mathematics, Ivan Franko Lviv National University \\
      Universytetska 1, Lviv, Ukraine}
\email{oravsky@mail.ru}
\abstract{A binary word is symmetric if it is a palindrome or an
antipalindrome. We define a new measure of asymmetry of a binary word equal to
the minimal number of letters of the word whose deleting from the word yields a
symmetric word and obtain upper and lower estimations of this measure.}
\keywords{palindrome, measure of symmetry, measure of asymmetry.}
\begin{document}
\maketitle

In this paper under the term word we shall understood the binary word over the
alphabet $\{a,b\}$. The set of all words can naturally be thought as a free
semigroup with two generators, which we shall use in the notation of the words.
There are various approaches to a measure of asymmetry of a word or a group
colored into two colors \cite{2}, \cite{4} (see \cite{1} for more references).
In the present paper we investigate a new measure of symmetry of a word,
suggested by the question of Ihor Protasov \cite{3}. By a {\it symmetric word}
we shall understood a word which is a palindrome or an antipalindrome. A word
$w=a_1\cdots a_n$ is called a {\it palindrome} (respectively, an {\it
antipalindrome}) if $a_i=a_{n-i+1}$ (respectively, $a_i\not=a_{n-i+1}$) for all
$i\le n.$ Given a word $w$ let $S_d(w)$ be the minimal number of letters of $w$
whose deleting from $w$ yields a palindrome or an antipalindrome.  Observe that
a word $w$ is symmetric if and only if $S_d(w)=0$. Thus the number $S_d(w)$ can
be thought as an asymmetry measure of $w$. For every positive integer $n$ let
$S_d(n)=\max\{S_d(w):w\in S$, $l(w)=n\}$ be the maximal asymetry measure
$S_d(w)$ of a word $w$ of length $l(w)=n$.

Ihor Protasov observed that $S_d(n)\le n/3$ for small $n$ and asked in \cite{3}
if this estimation holds for every $n$. Computer calculations show that this
conjecture fails already for $n=10.$ The values of $S_d(n)$ for $n\le 20$ are
in the table on the next page.
\begin{table}[ht]
\begin{center}
\begin{tabular}{|c|c|c|c|c|c|c|c|c|c|c|c|c|c|c|c|c|c|c|c|c|}
\hline
$n$ &1 & 2 & 3 & 4 & 5 & 6 & 7 & 8 & 9 & 10\\
\hline
$S_d(n)$ & 0 & 0 &1 & 1 & 1 & 2 & 2 & 2 & 3 & 4\\
\hline
 & 11 & 12 & 13 & 14 & 15 & 16 & 17 & 18 & 19 & 20\\
\hline
 & 4 & 4 & 5 & 5 & 5 & 6 & 7 & 7 & 7 & 8\\
\hline
\end{tabular}
\end{center}
\end{table}

The main result of the paper is

\begin{The} For every $n\ge 2$ the number $S_d(n)$ lies in the range
\[
\left[\frac{n+2\left[\frac{n-3}{7}\right]}{3}\right]\le S_d(n)\le \left[\frac {n}{2}\right].
\]
\end{The}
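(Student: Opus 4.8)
The plan is to reduce both inequalities to a single quantity. For a word $w$ let $P(w)$ and $A(w)$ denote the lengths of a longest palindromic, respectively antipalindromic, subsequence of $w$. Deleting letters so as to leave a palindrome costs exactly $l(w)-P(w)$ letters, and leaving an antipalindrome costs $l(w)-A(w)$; taking the cheaper option gives the identity
\[
S_d(w)=l(w)-\max\{P(w),A(w)\}.
\]
Thus the upper bound amounts to a lower bound on $\max\{P,A\}$ valid for every word, while the lower bound on $S_d(n)$ amounts to exhibiting a single word of length $n$ on which $\max\{P,A\}$ is small.

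For the upper bound I would use only palindromes. In any binary word of length $n$ the more frequent letter occurs at least $n-[n/2]$ times, and the constant subword consisting of its occurrences is a palindrome. Hence $P(w)\ge n-[n/2]$, so $\max\{P(w),A(w)\}\ge n-[n/2]$ and $S_d(w)\le [n/2]$ for every $w$, giving $S_d(n)\le[n/2]$.

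The lower bound is the substantial half. The approach is to construct, for each $n$, an explicit word $w_n$ that is as balanced as possible and then to bound $P(w_n)$ and $A(w_n)$ simultaneously. The useful reformulation is that a palindromic subsequence corresponds to a system of concentric arcs whose two endpoints carry equal letters, while an antipalindromic subsequence corresponds to concentric arcs whose endpoints carry opposite letters; $P(w)$ and $A(w)$ are twice the maximal depth of such a nest (plus a possible central letter in the palindromic case). The design tension, which dictates the constant $4/7$, is that a locally palindrome-like pattern such as $abba$ keeps $A$ small but makes $P$ large, whereas an antipalindrome-like pattern such as $aabb$ does the opposite; the extremal word must defeat both kinds of nest at once. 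I would therefore take $w_n$ to be built from a period-$7$ block containing three copies of one letter and four of the other (with a prescribed phase and a short boundary correction), since a $3$--$4$ split pins the trivial palindrome at length about $4n/7$ while the internal order of the block can be chosen to forbid any deeper equal- or opposite-arc nest.

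The key steps of the lower bound are then: (i) fix the block and the boundary adjustment so that the total counts of $a$ and $b$ differ by as little as the residue of $n$ modulo the period allows; (ii) prove $P(w_n)\le 4n/7+O(1)$ by an induction over the blocks, charging each added block with at most four units of any concentric equal-arc nest; (iii) prove the analogous bound $A(w_n)\le 4n/7+O(1)$ for opposite-arc nests; and (iv) carry out the exact bookkeeping in the residues $n\bmod 7$ so that $n-\max\{P(w_n),A(w_n)\}$ matches the stated floor expression $[(n+2[(n-3)/7])/3]$. The main obstacle is steps (ii)--(iii): controlling $P$ and $A$ at the same time, because any rearrangement that shortens one nest tends to lengthen the other, so the induction must track both longest nests jointly and show that the chosen block order is simultaneously optimal against both. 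The residue accounting in (iv) is then the source of the exact, rather than merely asymptotic, constant in the lower estimate.
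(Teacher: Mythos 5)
Your reduction $S_d(w)=l(w)-\max\{P(w),A(w)\}$ is sound, and your upper bound is correct and is exactly the paper's argument: the majority letter occurs at least $n-[n/2]$ times and its occurrences form a constant, hence palindromic, subword, so $S_d(w)\le[n/2]$.

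The lower bound, however, is where all the content of the theorem lies, and you have not proved it. Steps (ii) and (iii) of your plan --- the simultaneous bounds $P(w_n)\le 4n/7+O(1)$ and $A(w_n)\le 4n/7+O(1)$ --- are precisely the hard part; you leave them as an unspecified induction and yourself flag them as ``the main obstacle.'' Moreover the construction you propose, a periodic word built from a period-$7$ block with a $3$--$4$ letter split, is not the one the paper uses and is unlikely to work at all, because periodicity is exactly what long palindromic subsequences exploit: already for two blocks, $(a^3b^4)^2=aaabbbbaaabbbb$ contains the palindromic subsequence $b^4a^3b^4$ of length $11>8=4k$, and even a spread-out block such as $abababb$ gives, at $k=2$, the palindromic subsequence $babababab$ of length $9>8$. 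The paper's extremal word is decidedly non-periodic, namely $w_{n,\alpha,\beta}=b^{n+1}(ab)^nb^{2n+1+\alpha}a^{2n+1+\beta}$: one alternating segment flanked by three constant runs of unequal lengths, with distinct first and last letters so that any palindromic subsequence must discard an entire extreme run. Its analysis (Lemma 4) is a case split on which run is discarded and on how the halves of the putative palindrome or antipalindrome meet the segment $(ab)^n$, supported by the reduction Lemmas 2 and 3 that allow one to keep the end letters undeleted. Finally, an $O(1)$ error term cannot produce the exact floor expression $\left[\frac{n+2[(n-3)/7]}{3}\right]$, and your plan says nothing about the residual cases $3\le n\le 9$ (the case $t=0$), which the paper settles by computer calculation. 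As it stands, the lower half of the theorem remains unproved.
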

In order to prove the main theorem we need some lemmas.

\begin{Lem} Let $w=a_1\cdots a_n$ be a word with $a_1=a_n$.
If after deleting some $k$ letters we obtain a palindrome, then we can obtain a
palindrome by deleting $l\le k$ letters such that the letters $a_1,a_n$ rest
undeleted.
\end{Lem}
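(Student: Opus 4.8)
The plan is to think of a deletion as a choice of \emph{surviving} positions and then to adjust that choice only near the two ends. Writing the surviving positions as $1\le i=i_1<i_2<\cdots<i_m=j\le n$, the hypothesis says that $a_{i_1}a_{i_2}\cdots a_{i_m}$ is a palindrome, so in particular its extreme letters agree: $a_i=a_j$. The single structural fact I will use repeatedly is that wrapping a palindrome with a matched pair of equal letters again yields a palindrome; since $a_1=a_n$ by hypothesis, $a_1$ is precisely the letter I am allowed to place at both ends.

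First I would dispose of the case $i=1$ and $j=n$, where both endpoints already survive and $l=k$ works unchanged. Next, if both $a_1$ and $a_n$ were deleted, I would simply restore them: all survivors then lie strictly between $1$ and $n$, so the new surviving word is $a_1\,(a_{i_1}\cdots a_{i_m})\,a_n$, a palindrome because $a_1=a_n$, obtained by deleting $k-2<k$ letters.

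The remaining case is that exactly one endpoint, say $a_1$, was deleted while $a_n$ survived (so $j=n$); the other sub-case is symmetric. Here the crucial point is that, since $a_n=a_j=a_i$ and $a_1=a_n$, the first surviving letter $a_i$ is in fact equal to $a_1$. I would then slide the deletion from position $1$ onto position $i$: delete $a_i$ and restore $a_1$. Because $a_1$ and $a_i$ are the same letter, the surviving word is literally unchanged, hence still a palindrome; the number of deleted letters is still $k$; and now $a_1$ survives together with $a_n$.

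I expect the only delicate point to be the degenerate configuration in this last case where $i=j$, i.e.\ the surviving palindrome is the single letter $a_n$: then ``sliding'' would delete the lone survivor and defeat the purpose. This is handled separately and trivially --- just keep the two endpoints and delete everything between them, so that the surviving word is $a_1a_n$, a palindrome since $a_1=a_n$, at the cost of $n-2\le k$ deletions. Collecting the cases gives $l\le k$ with $a_1$ and $a_n$ undeleted in every situation.
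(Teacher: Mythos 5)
Your proof is correct and follows essentially the same route as the paper's: restore both endpoints when both were deleted, and otherwise transfer the single endpoint deletion onto the extreme surviving letter, which must equal $a_1=a_n$ because the surviving word is a palindrome. You are in fact slightly more careful than the published argument, since you explicitly treat the degenerate single-survivor case where this ``sliding'' would remove the only remaining letter.
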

\begin{proof} If after deleting some $k$ letters we obtain a palindrome,
and both of the letters $a_1$ and $a_n$ were deleted, then we simply undelete
these letters and obtain a longer palindrome. If only one of the letters $a_1$
and $a_n$ (say, $a_n$) was deleted then there exists $m<n$ such that the
letters $a_{m+1}\dots a_n$ were deleted and $a_m$ was not. Then $a_m=a_1$ and
we may delete the letter $a_m$ instead of the letter $a_n.$
\end{proof}

\begin{Lem} Let  $w=a_1\cdots a_n$ be a word with $a_1\not=a_n.$
If after deleting some $k$ letters we obtain an antipalindrome, then we can
obtain an antipalindrome by deleting $l\le k$ letters such that the letters
$a_1,a_n$ rest undeleted.
\end{Lem}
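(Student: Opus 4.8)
The plan is to follow the proof of Lemma 1 almost verbatim, replacing the palindrome identity $a_1=a_n$ by the antipalindrome inequality $a_1\neq a_n$ and supplying one extra deduction that exploits the binary alphabet.

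First I would split into the same two cases according to how many of the extreme letters were removed. If both $a_1$ and $a_n$ were deleted, I undelete them: prepending $a_1$ and appending $a_n$ to the obtained antipalindrome $Q=b_1\cdots b_m$ produces $a_1 b_1\cdots b_m a_n$. The inner pairs are unchanged and remain mismatched, while the new outermost pair is $(a_1,a_n)$, which is mismatched precisely because $a_1\neq a_n$. Hence this longer word is again an antipalindrome, and it was obtained with $k-2<k$ deletions, so $l<k$ in this case.

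The substantive case is when exactly one extreme letter, say $a_n$, was deleted while $a_1$ was kept; the situation with $a_1$ deleted is symmetric. As in Lemma 1, let $a_m$ (with $m<n$) be the last undeleted letter, so the antipalindrome we obtained begins with $a_1$ and ends with $a_m$. Being an antipalindrome, its first and last letters differ, giving $a_1\neq a_m$. Here is the point where the argument genuinely departs from the palindrome case: I now combine $a_1\neq a_m$ with the hypothesis $a_1\neq a_n$, and since $|\{a,b\}|=2$, both $a_m$ and $a_n$ must equal the unique letter distinct from $a_1$; therefore $a_m=a_n$. I then delete $a_m$ in place of $a_n$, that is, keep $a_n$ undeleted and remove $a_m$ instead. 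The number of deleted letters is still $k$, and because $a_m=a_n$ the resulting word coincides letter-for-letter with the previous antipalindrome, so it is again an antipalindrome, now with both $a_1$ and $a_n$ undeleted.

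The only real obstacle is this middle step, and it is where the binary hypothesis is essential: in Lemma 1 the palindrome condition directly forces $a_m=a_1=a_n$, whereas here the antipalindrome condition only yields $a_1\neq a_m$, and one must invoke the two-letter alphabet together with $a_1\neq a_n$ to conclude $a_m=a_n$. Once that equality is in hand the deletion-swap is purely formal, and the two cases together give an antipalindrome obtained with $l\le k$ deletions leaving $a_1,a_n$ undeleted, as required.
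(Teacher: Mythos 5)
Your proof is correct and is exactly the argument the paper intends: the paper's proof of this lemma is just the remark that it is ``similar to that of Lemma~2,'' and you have carried out that adaptation faithfully. You also correctly isolate the one step that is not purely formal---using the binary alphabet together with $a_1\neq a_n$ and $a_1\neq a_m$ to conclude $a_m=a_n$ before swapping the deletion---which is precisely what makes the analogy go through.
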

\begin{proof} It is similar to that of Lemma 2.
\end{proof}

The first $[l(w)/2]$ letters of a word $w$ are called {\it the first half} and
the last $[l(w)/2]$ letters of $w$ {\it the second half} of the word $w.$

\begin{Lem} Let $n\in\N$ and $(\alpha,\beta)\in\{(0,0),(1,0),(1,1),(2,1),(3,1),(3,2),(4,2)\}.$
Let $w_{n,\alpha,\beta}=b^{n+1}(ab)^nb^{2n+1+\alpha}a^{2n+1+\beta}.$
Then $S_d(w_{n,\alpha,\beta})\ge 3n+1+[(\alpha+\beta)/3].$
\end{Lem}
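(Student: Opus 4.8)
The plan is to translate the statement into an upper bound on the longest symmetric subsequence. Writing $\lambda_p(w)$ and $\lambda_a(w)$ for the lengths of the longest palindromic and the longest antipalindromic subsequences of $w$, deleting the complementary letters gives $S_d(w)=l(w)-\max\{\lambda_p(w),\lambda_a(w)\}$. Since $l(w_{n,\alpha,\beta})=(n+1)+2n+(2n+1+\alpha)+(2n+1+\beta)=7n+3+\alpha+\beta$, the claimed inequality $S_d\ge 3n+1+[(\alpha+\beta)/3]$ is equivalent to $\max\{\lambda_p,\lambda_a\}\le 4n+2+\alpha+\beta-[(\alpha+\beta)/3]$. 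So it suffices to bound $\lambda_p$ and $\lambda_a$ separately and then check the arithmetic for the seven admissible pairs $(\alpha,\beta)$.

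I will use the standard description of a symmetric subsequence as a nested family of arcs about a centre: a palindromic (resp.\ antipalindromic) subsequence corresponds to non-crossing pairs $(i_t,j_t)$ with $i_1<\dots<i_k<j_k<\dots<j_1$ and $a_{i_t}=a_{j_t}$ (resp.\ $a_{i_t}\ne a_{j_t}$), together with at most one unpaired middle letter in the palindromic case. Split $w$ into the four blocks $\mathrm{I}=b^{n+1}$, $\mathrm{II}=(ab)^n$, $\mathrm{III}=b^{2n+1+\alpha}$, $\mathrm{IV}=a^{2n+1+\beta}$. The decisive feature is a crossing obstruction: the only $a$'s far to the right are those of block $\mathrm{IV}$ and the only $b$'s far to the left are those of block $\mathrm{I}$, so an arc carrying a block-$\mathrm{I}$ letter and an arc carrying a block-$\mathrm{IV}$ letter of the ``wrong'' type are forced to cross. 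Lemmas 2 and 3 let me normalise the extreme letters of an optimal subsequence, which together with the obstruction pins down where the centre can lie.

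For $\lambda_p$: every $a$-pair has its right endpoint in block $\mathrm{IV}$ and its left endpoint in block $\mathrm{II}$, so using $k_a$ such pairs forces all $b$-pairs to nest strictly inside them, which discards block $\mathrm{I}$ together with the first $k_a-1$ block-$\mathrm{II}$ $b$'s; a direct count then shows the surviving palindrome has length $3n+2+\alpha+k_a\le 4n+2+\alpha$, attained at $k_a=n$ and also by the all-$b$ subsequence $b^{4n+2+\alpha}$ at $k_a=0$. The remaining centre positions are handled by the same counting and yield at most $4n+2$, whence $\lambda_p\le 4n+2+\alpha$. For $\lambda_a$: the pure $(b,a)$ matching produces the antipalindrome $b^{2n+1+\beta}a^{2n+1+\beta}$ of length $4n+2+2\beta$, and I claim nothing longer exists. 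Introducing any $(a,b)$ pair (left $a$ from block $\mathrm{II}$, right $b$ from block $\mathrm{III}$) forces the outer $(b,a)$ pairs to draw their left $b$'s from the positions preceding that $a$, and the resulting count never exceeds $2n+1$ pairs; so the balanced $(b,a)$ configuration wins and $\lambda_a\le 4n+2+2\beta$.

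Finally I substitute into the target. The two inequalities needed are $4n+2+\alpha\le 4n+2+\alpha+\beta-[(\alpha+\beta)/3]$ and $4n+2+2\beta\le 4n+2+\alpha+\beta-[(\alpha+\beta)/3]$, that is $[(\alpha+\beta)/3]\le\beta$ and $\beta+[(\alpha+\beta)/3]\le\alpha$; both hold for each pair in $\{(0,0),(1,0),(1,1),(2,1),(3,1),(3,2),(4,2)\}$, which is exactly why this list (and the weight $[(\alpha+\beta)/3]$) was chosen. I expect the main obstacle to be the nesting analysis when the centre lies inside block $\mathrm{II}$: there a crude letter count permits as many as $3n+1$ pairs, and only the crossing obstruction brings this down to the claimed bound. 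A secondary nuisance is the unpaired middle letter of an odd palindrome, which must be accounted for so that the palindromic bound stays at $4n+2+\alpha$ rather than slipping to $4n+3$.
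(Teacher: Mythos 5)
Your reduction is sound: $S_d(w)=l(w)-\max\{\lambda_p(w),\lambda_a(w)\}$, so with $l(w_{n,\alpha,\beta})=7n+3+\alpha+\beta$ the lemma is equivalent to $\max\{\lambda_p,\lambda_a\}\le 4n+2+\alpha+\beta-[(\alpha+\beta)/3]$, and your two target bounds $\lambda_p\le 4n+2+\alpha$, $\lambda_a\le 4n+2+2\beta$ do imply this precisely via $[(\alpha+\beta)/3]\le\beta$ and $[(\alpha+\beta)/3]\le\alpha-\beta$, which hold for all seven admissible pairs (these are the same two inequalities the paper isolates at the start of its proof). Your intermediate bounds are in fact true, and even sharper than what the paper establishes case by case. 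The organization is genuinely different: the paper splits on which end-block of the word must be wholly deleted (using Lemmas 2 and 3 to normalize the extreme letters) and then counts via the structure of the two halves of the surviving word, whereas you classify the nested arcs of a symmetric subsequence by which blocks their endpoints occupy.

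However, as written the argument has a genuine gap exactly where the content of the lemma lives. Your key structural assertion for $\lambda_p$ --- ``every $a$-pair has its right endpoint in block IV and its left endpoint in block II'' --- is false for a general palindromic subsequence: $a$-pairs with both endpoints in block II (e.g.\ the palindrome using the first and last $a$ of $(ab)^n$) and with both endpoints in block IV (e.g.\ the palindrome $a^{2n+1+\beta}$) both occur, and they are not covered by your $3n+2+\alpha+k_a$ count. These cases can be disposed of (a (IV,IV) pair forces every pair to be an $a$-pair, giving length at most $3n+2+\beta$; a configuration with only (II,II) $a$-pairs gives at most $2n+2j+1\le 4n+1$), but that is an argument you would have to supply, not a ``remaining centre position handled by the same counting.'' Likewise, when (II,IV) pairs are present you need the inequality $k_a\le i$ (the innermost left endpoint is at least the $k_a$-th $a$ of block II) to turn the inner count $4n+2+\alpha-2i$ into the claimed total $2k_a+4n+2+\alpha-2i\le 4n+2+\alpha$; this is the step you yourself flag as ``the main obstacle,'' and deferring it means the proof is not yet complete. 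The antipalindromic bound has the same character: your phrase ``the resulting count never exceeds $2n+1$ pairs'' is correct but needs the explicit observation that all pairs whose $a$ lies in block IV are outermost, so that if their innermost left endpoint is the $t$-th $b$ of block II one gets at most $(n+1+t)+(n-t)=2n+1$ pairs. In short: right plan, true claims, but the decisive counting arguments are asserted rather than proved, and one of them is stated in a form that is literally false without the accompanying case analysis.
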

\begin{proof}
Note that
$l(w_{n,\alpha,\beta})=n+1+2n+2n+1+\alpha+2n+1+\beta=7n+3+\alpha+\beta.$ It is
easy to check that for all $\alpha,\beta$ from the lemma's condition we have
$[(\alpha+\beta)/3]\le \beta$ and $[(\alpha+\beta)/3]\le\alpha-\beta.$ Suppose
that we delete $k$ letters from the word $w_{n,\alpha,\beta}$ and obtain a
final palindrome $p.$ We shall show that $k\ge 3n+1+[(\alpha+\beta)/3].$ Since
a palindrome has the same first and last letters, either the first $n+1$
letters $b,$ or the last $2n+1+\beta$ letters $a$ are deleted.

Assume that the first $n+1$ letters $b$ were deleted. Suppose that $l(p)>
l(w_{n,\alpha,\beta})-(3n+1)-[(\alpha+\beta)/3]=
4n+2+\alpha+\beta-[(\alpha+\beta)/3]\ge 4n+2+\alpha+\beta-\beta>2(2n).$ Then
every letter from the subword $(ab)^n$ either is deleted, or belongs to the
first half of the final palindrome. Therefore the second half of the final
palindrome is equal to $b^{k_b}a^{k_a}$ and the first half of the final
palindrome is equal to $a^{k_a}b^{k_b} $ for some nonnegative numbers $k_a$ and
$k_b$. If in the subword $(ab)^n$ it was deleted $i_a$ letters $a$ and $i_b$
letters $b$ then $i_a+i_b\ge n-1.$ If $k_b=0$ then $l(p)\le
n+2n+1+\beta+1<4n+2+\alpha,$ a contradiction. Therefore there exists a letter
$b$ at the second half and only $n-i_a$ letters from the subword
$a^{2n+1+\beta}$ have a pair in the first half of the final palindrome. Thus
other $2n+1+\beta-(n-i_a)$ letters $a$ of the subword $a^{2n+1+\beta}$ are
deleted. Thus in this case $l(p)\le 7n+3+\alpha+\beta-((n+1)+(i_a+i_b)+
(2n+1+\beta-(n-i_a)))=5n+1+\alpha-2i_a-i_b\le 4n+2+\alpha-i_a \le
4n+2+\alpha+\beta-[(\alpha+\beta)/3],$ a contradiction. Therefore $l(p)\le
l(w_{n,\alpha,\beta})-(3n+1)-[(\alpha+\beta)/3]$ and $k\ge
3n+1+[(\alpha+\beta)/3].$

Assume now that the last $2n+1+\beta$ letters $a$ are deleted. Then Lemma 2
implies that it suffices to show that we cannot delete
$l_d<n+[(\alpha+\beta)/3]-\beta$ letters from the word $(ab)^nb^{n+\alpha}$ and
obtain a palindrome $q$. If $l(q)$ is an odd number and we have $m$ letters $a$
in the second half then there is at most $n-m-1$ letters $b$ in the first half
of $q$ and therefore $l(q)\le 1+2m+2(n-m-1)=2n-1.$ If $l(q)$ is even and we
have $m$ letters $a$ in the second half of $q,$ then there are at most $n-m$
letters $b$ in the first half and therefore $l(q)\le 2m+2(n-m)=2n.$ In the both
cases $l_d\ge 3n+\alpha-l(q)\ge n+\alpha\ge n+[(\alpha+\beta)/3]-\beta.$

Suppose that we delete $k$ letters from the word $w_{n,\alpha,\beta}$ and obtain an
antipalindrome. We shall show that $k\ge 3n+1+[(\alpha+\beta)/3].$ Lemma 3
implies that it suffices to show that we cannot delete
$l<3n+1+[(\alpha+\beta)/3]$ letters from the word
$(ab)^nb^{2n+1+\alpha}a^{n+\beta}$ and obtain an antipalindrome $p$.

Suppose that all the letters of the second half of the final antipalindrome $p$
belongs to the word $b^{2n+1+\alpha}a^{n+\beta}.$ Therefore the first half of
$p$ has a representation $b^{k_a}a^{k_b} $ similar to representation
$b^{k_b}a^{k_a}$ of its second half. If $k_b=0$ then $l(p)\le 2(n+\beta)$ and
$l\ge 5n+1+\alpha+\beta-2n-2\beta= 3n+1+\alpha-\beta\ge
3n+1+[(\alpha+\beta)/3].$ If $k_b>0$ then in the subword $(ab)^n$ it was
deleted $i_a\ge k_a$ letters $a.$ Since $k_b\le n-i_a,$ we obtain that $l(p)\le
2(k_a+k_b)\le 2(k_a+n-k_a)=2n.$ Thus in this case $l\ge 5n+1+\alpha+\beta-2n\ge
3n+1+[(\alpha+\beta)/3].$

Suppose that in the second half of the final antipalindrome $p$ there are $m>0$
letters of the word $(ab)^n.$ Therefore the word consisting of the first
$l(p)/2-m$ letters of $p$ is equal to $b^{k_a}a^{k_b},$ while the word
consisting of the last $l(p)/2-m$ letters is equal to $b^{k_b}a^{k_a}.$ Then in
the subword $(ab)^n$ were deleted $i_a\ge k_a$ letters $a$ and $i_b\ge k_b-1$
letters $b.$ Consequently, $l(p)\le 2n-i_a-i_b+k_a+k_b\le 2n+1.$ Since $l(p)$
is even $l(p)\le 2n.$ Thus in this case $l\ge 5n+1+\alpha+\beta-2n\ge
3n+1+[(\alpha+\beta)/3].$
\end{proof}

Now we can prove Theorem 1.
\begin{proof} To prove that $S_d(n)\le \left[\frac n2\right]$ it
suffices to remark that we  can always delete all letters $a$ or all letters
$b$ and obtain a palindrome. Let $S_d'(n)=\left[\frac{n+2[(n-3)/7]}{3}\right].$
Lemma 4 yields that for every numbers $t\ge 1,0\le k\le 6$ we have
$S_d(7t+3+k)\ge 3t+1+[k/3].$ The computer calculation shows that it is true
also for $t=0.$ On the other hand, $S_d'(7t+3+k)=[(7t+3+k+2t)/3]=3t+1+[k/3].$
\end{proof}

The referee noted the following 

\begin{Rem} It is easy to check that the inequality in the lemma is in fact 
an equality. This is of course not needed to prove the theorem, 
but reveals that there is no hope to get a better bound using 
the same words. 
\end{Rem}

\begin{Rem} For all $2\le n\le 20$ the lower bound from Theorem 1 is exact.
\end{Rem}

\begin{Rem}
Like in many other related with symmetry problems, we may consider the
following antagonistic game. Let $n\in\N.$ The first player selects a word of
the length $n.$ Then beginning with the second player, two players consequently
delete letters of this word. The game ends when the word becomes a palindrome
or an antipalindrome. The gain $g_1$ of the first player is the number of
moves.

 Consider the following strategy of the first player for $n\ge 6.$
An the beginning the first player selects the word $a^kb^{k+2}$ for even $n$
and the word $a^kb^{k+3}$ for odd $n.$ If the second player has deleted a
letter $a$ then the first player deletes a letter $b$ and conversely. This
strategy yields an estimation $g_1(n)\ge n-4\gg S_d(n).$
\end{Rem}

\end{document}